\newtheorem{theorem}{Theorem}[section]
\newtheorem{lemma}[theorem]{Lemma}
\newtheorem{example}[theorem]{Example}
\definecolor{pink}{rgb}{1, .75, .8}
\definecolor{lgrey}{gray}{.85}
\newcommand{\norm}[1]{\left\|#1\right\|}  
\begin{document}

\title[Visibility in Proximal Delaunay Meshes]{Visibility in Proximal Delaunay Meshes}

\author[J.F. Peters]{J.F. Peters}
\email{James.Peters3@umanitoba.ca}
\address{Computational Intelligence Laboratory,
University of Manitoba, WPG, MB, R3T 5V6, Canada and Mathematics Department, Faculty of Arts and Science, Ad\i yaman University, 02040 Ad\i yaman, Turkey}
\thanks{The research has been supported by the Scientific and Technological Research
Council of Turkey (T\"{U}B\.{I}TAK) Scientific Human Resources Development
(BIDEB) under grants 2221-1059B211301223, 2221-1059B211402463 and Natural Sciences \&
Engineering Research Council of Canada (NSERC) discovery grant 185986.}

\subjclass[2010]{Primary 65D18; Secondary 54E05, 52C20, 52C22}

\date{}

\dedicatory{Dedicated to the Memory of Som Naimpally}

\begin{abstract}
This paper introduces a visibility relation $v$ (and the strong visibility relation $\mathop{v}\limits^{\doublewedge}$) on proximal Delaunay meshes.  A main result in this paper is that the visibility relation $v$ is equivalent to Wallman proximity.    In addition, a Delaunay triangulation region endowed with the visibility relation $v$ has a local Leader uniform topology.   
\end{abstract}

\keywords{Convex polygon, Delaunay mesh, proximal, relator, visibility.}

\maketitle

\section{Introduction}
Delaunay triangulations, introduced by B.N Delone [Delaunay]~\cite{Delaunay1934}, represent pieces of a continuous space.  A \emph{triangulation} is a collection of triangles, including the edges and vertices of the triangles in the collection.  

\setlength{\intextsep}{0pt}
\begin{wrapfigure}[8]{R}{0.40\textwidth}
\begin{minipage}{4.2 cm}
\begin{center}
 \begin{pspicture}
 (0.0,1.0)(2.5,3.8)
\psline[linestyle=solid,showpoints=true,dotscale=1.0]{-}(0.84,2.35)(2.22,1.35)(2.75,3.00)(2.95,3.50)(3.75,3.33)(0.84,2.35)
\rput(0.58,2.35){\footnotesize$\boldsymbol{p}$}
\rput(2.32,1.20){\footnotesize$\boldsymbol{q}$}
\rput(2.91,2.85){\footnotesize$\boldsymbol{r}$}
\rput(3.75,3.13){\footnotesize$\boldsymbol{s}$}
\rput(2.95,3.70){\footnotesize$\boldsymbol{t}$}
 \end{pspicture}
\caption{Visibility}
\label{fig:visiblePoint}
\end{center}
\end{minipage}
\end{wrapfigure}
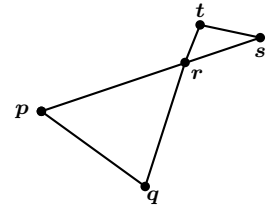
\setlength{\intextsep}{2pt}

A 2D \emph{Delaunay triangulation} of a set of sites (generators) $S\subset \mathbb{R}^2$ is a triangulation of the points in $S$.  The set of vertices (called sites) in a Delaunay triangulation define a \emph{Delaunay mesh}.  A Delaunay mesh endowed with a nonempty set of proximity relations is a \emph{proximal Delaunay mesh}.  A proximal Delaunay mesh is an example of a proximal relator space~\cite{Peters2014FilomatRelator}, which is an extension a Sz\'{a}z relator space~\cite{Szaz1987,Szaz2000,Szaz2009}.

Let $S\subset \mathbb{R}^2$ be a set of distinguished points called \emph{sites}, $p,q\in S$, $\overline{pq}$ straight line segment in the Euclidean plane.  A site $p$ in a line is \emph{visible} to another site $q$ in the same straight line segment, provided there is no other site between $p$ and $q$.  

\begin{example} {\bf Visible Points}.\\
A pair of Delaunay triangles $\bigtriangleup(pqr), \bigtriangleup(rst)$ are shown in Fig.~\ref{fig:visiblePoint}. Points $r,q$ is visible from $p$ but in the straight line segment $\overline{ps}$, $s$ is not visible from $p$.  Similarly, points  $p,r$ are visible from $q$ but in the straight line segment $\overline{qs}$, $t$ is not visible from $q$.  From $r$, points $p,q,s,t$ are visible.  \qquad \textcolor{black}{$\blacksquare$}
\end{example}

A straight edge connecting $p$ and $q$ is a \emph{Delaunay edge} if and only if the Vorono\"{i} region of $p$~\cite{Edelsbrunner2014,Peters2014arXivVoronoi} and Vorono\"{i} region of $q$ intersect along a common line segment~\cite[\S I.1, p. 3]{Edelsbrunner2001}.  For example, in Fig.~\ref{fig:convexPolygon}, the intersection of Vorono\"i regions $V_p, V_q$ is a triangle edge, {\em i.e.}, $V_p \cap V_q = \overline{xy}$.  Hence, $\overline{pq}$ is a Delaunay edge in Fig.~\ref{fig:convexPolygon}. 

\begin{figure}[!ht]
\begin{center}
 \begin{pspicture}
 (0.0,1.2)(5.5,4.0)
\psframe[linecolor=white](2.5,0.5)(6.5,3.5)
\rput(4.5,2){\PstPentagon[yunit=0.8,fillstyle=solid,fillcolor=lgrey,linestyle=solid,linecolor=gray]}
\pscircle[fillstyle=solid,fillcolor=black](4.5,2.0){0.08}
\psline[linestyle=solid,dotsep=0.03]{-}(4.5,2.0)(5.35,3.25)
\pscircle[fillstyle=solid,fillcolor=black](5.35,3.25){0.08}
\psline[linestyle=solid,dotsep=0.03]{-}(4.5,2.0)(5.95,1.55)
\pscircle[fillstyle=solid,fillcolor=black](5.95,1.55){0.08}
\psline[linestyle=solid]{-}(5.35,3.25)(5.95,1.55)
\psline[linewidth=0.8pt,linestyle=dotted,dotsep=0.05,linecolor=black]{-}(4.53,2.8)(4.53,3.5)
\pscircle[fillstyle=solid,fillcolor=white,linecolor=black](4.53,2.8){0.08}
\psline[linewidth=0.8pt,linestyle=dotted,dotsep=0.05,linecolor=black]{-}(4.53,3.5)(5.00,3.8)
\pscircle[fillstyle=solid,fillcolor=white,linecolor=black](4.53,3.5){0.08}
\psline[linewidth=0.8pt,linestyle=dotted,dotsep=0.05,linecolor=black]{-}(5.00,3.8)(6.80,3.5)
\pscircle[fillstyle=solid,fillcolor=white,linecolor=black](5.00,3.8){0.08}
\psline[linewidth=0.8pt,linestyle=dotted,dotsep=0.05,linecolor=black]{-}(6.80,3.5)(6.48,2.36)
\pscircle[fillstyle=solid,fillcolor=white,linecolor=black](6.80,3.5){0.08}
\psline[linewidth=0.8pt,linestyle=dotted,dotsep=0.05,linecolor=black]{-}(5.45,2.30)(6.48,2.36)
\pscircle[fillstyle=solid,fillcolor=white,linecolor=black](6.48,2.36){0.08}
\pscircle[fillstyle=solid,fillcolor=white,linecolor=black](5.45,2.30){0.08}
\psline[linewidth=0.8pt,linestyle=dotted,dotsep=0.05,linecolor=black]{-}(3.62,2.20)(2.23,2.5)
\pscircle[fillstyle=solid,fillcolor=white,linecolor=black](3.62,2.20){0.08}
\psline[linewidth=0.8pt,linestyle=dotted,dotsep=0.05,linecolor=black]{-}(3.88,1.35)(3.28,0.75)
\pscircle[fillstyle=solid,fillcolor=white,linecolor=black](3.88,1.35){0.08}
\psline[linewidth=0.8pt,linestyle=dotted,dotsep=0.05,linecolor=black]{-}(5.00,1.38)(5.38,0.75)
\psline[linewidth=0.8pt,linestyle=dotted,dotsep=0.05,linecolor=black]{-}(5.38,0.75)(6.88,1.25)
\psline[linewidth=0.8pt,linestyle=dotted,dotsep=0.05,linecolor=black]{-}(6.88,1.25)(6.48,2.36)
\pscircle[fillstyle=solid,fillcolor=white,linecolor=black](5.00,1.38){0.08}
\pscircle[fillstyle=solid,fillcolor=white,linecolor=black](5.38,0.75){0.08}
\pscircle[fillstyle=solid,fillcolor=white,linecolor=black](6.88,1.25){0.08}
\pscircle[fillstyle=solid,fillcolor=white,linecolor=black](6.48,2.36){0.08}
\rput(4.45,2.2){\footnotesize$\boldsymbol{p}$}\rput(6.15,1.55){\footnotesize$\boldsymbol{q}$}
\rput(5.55,3.25){\footnotesize$\boldsymbol{r}$}\rput(5.45,2.50){\footnotesize$\boldsymbol{x}$}
\rput(4.95,1.18){\footnotesize$\boldsymbol{y}$}
\rput(5.98,2.85){\tiny$\boldsymbol{\bigtriangleup(pqr)}$}
\rput(3.95,2.1){\footnotesize$\boldsymbol{V_p}$}
\rput(5.50,1.10){\footnotesize$\boldsymbol{V_q}$}
 \end{pspicture}
\end{center}
\caption[]{Delaunay triangle \footnotesize $\boldsymbol{\bigtriangleup(pqr)}$}
\label{fig:convexPolygon}
\end{figure}  

A triangle with vertices $p,q,r\in S$ is a \emph{Delaunay triangle} (denoted $\bigtriangleup(pqr)$ in Fig.~\ref{fig:convexPolygon}), provided the edges in the triangle are Delaunay edges.  
This paper introduces proximal Delaunay triangulation regions derived from the sites of Vorono\"{i} regions~\cite{Peters2014arXivVoronoi}, which are named after the Ukrainian mathematician Georgy Vorono\"{i}~\cite{Voronoi1907}.

A nonempty set $A$ of a space $X$ is a \emph{convex set}, provided $\alpha A + (1-\alpha)A\subset A$ for each $\alpha\in[0,1]$~\cite[\S 1.1, p. 4]{Beer1993}.  
A \emph{simple convex set} is a closed half plane (all points on or on one side of a line in $R^2$~\cite{Edelsbrunner2014}). The edges in a Delaunay mesh are examples of convex sets.  
A closed set $S$ in the Euclidean space $E^n$ is \emph{convex} if and only if to each point in $E^n$ there corresponds a unique nearest point in $S$.   
For $z\in S$, a closed set in $\mathbb{R}^n, S_z = \left\{x\in E: \left\|x - z\right\| = \mathop{inf}\limits_{y\in S}\left\|x - y\right\|\right\}$ is a \emph{convex cone}~\cite{Phelps1957}.

\begin{lemma}\label{lem:convexity}~{\rm \cite[\S 2.1, p. 9]{Edelsbrunner2014}} The intersection of convex sets is convex.
\end{lemma}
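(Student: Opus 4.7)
The plan is to unpack the definition of convexity given just before the lemma and apply it factor-by-factor across an arbitrary family. Let $\{A_i\}_{i\in I}$ be a collection of convex subsets of a space $X$, and set $B = \bigcap_{i\in I} A_i$. I want to verify the defining inclusion $\alpha B + (1-\alpha) B \subset B$ for every $\alpha \in [0,1]$.

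First I would fix $\alpha \in [0,1]$ and pick arbitrary $x, y \in B$. By definition of intersection, $x \in A_i$ and $y \in A_i$ for every index $i \in I$. Then, using the hypothesis that each $A_i$ is convex, I can conclude that the convex combination $\alpha x + (1-\alpha) y$ lies in $A_i$ for every $i$. Since this holds uniformly in $i$, the point $\alpha x + (1-\alpha) y$ belongs to $\bigcap_{i\in I} A_i = B$.

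Finally, because $x, y$ were arbitrary elements of $B$ and $\alpha$ was an arbitrary element of $[0, 1]$, this yields $\alpha B + (1-\alpha) B \subset B$, which is exactly the convexity condition for $B$ as stated in the paper. The empty intersection (if $I = \varnothing$) can be handled either by convention (taking the ambient space as the intersection) or by restricting to nonempty $I$; since the statement is invoked in a geometric setting with concrete Vorono\"i and Delaunay regions, I would simply remark that the case of interest has $I$ finite and nonempty.

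There is essentially no obstacle here: the argument is a direct quantifier swap (for all $i$, $x,y \in A_i$ implies convex combinations lie in $A_i$, hence in the intersection), and no use of topology, dimension, or the Euclidean structure is required. The only thing worth being careful about is to state the result for an arbitrary indexing family rather than just two sets, so that it applies to the Vorono\"i-region intersections that appear later in the paper.
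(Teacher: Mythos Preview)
Your argument is correct and follows essentially the same idea as the paper's proof: pick points in the intersection, observe they lie in each factor, and use convexity of each factor to conclude the convex combination lies in the intersection. The only differences are cosmetic---the paper treats just two sets $A,B\subset\mathbb{R}^2$ and phrases the check via line segments $\overline{xy}$, whereas you work with an arbitrary family and use the $\alpha x+(1-\alpha)y$ formulation directly; your version is slightly more general but not a different method.
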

\begin{proof}
Let $A,B\subset \mathbb{R}^2$ be convex sets and let $K = A\cap B$.  For every pair points $x,y\in K$, the line segment $\overline{xy}$ connecting $x$ and $y$ belongs to $K$, since this property holds for all points in $A$ and $B$.   Hence, $K$ is convex.
\end{proof}

\section{Preliminaries}
Delaunay triangles are defined on a finite-dimensional normed linear space $E$ that is topological.  For simplicity, $E$ is the Euclidean space $\mathbb{R}^2$.  The \emph{closure} of $A\subset E$ (denoted $\mbox{cl}A$) is defined by
\begin{align*}
\mbox{cl}(A) &= \left\{x\in X: D(x,A)=0\right\},\ \mbox{where}\\
D(x,A) &= inf\left\{\norm{x-a}: a\in A\right\},
\end{align*}
{\em i.e.}, $\mbox{cl}(A)$ is the set of all points $x$ in $X$ that are close to $A$ ($D(x,A)$ is the Hausdorff distance \cite[\S 22, p. 128]{Hausdorff1914} between $x$ and the set $A$ and $\norm{x,a}$ is the Euclidean distance between $x$ and $a$).

\setlength{\intextsep}{0pt}
\begin{wrapfigure}[11]{R}{0.40\textwidth}
\begin{minipage}{4.2 cm}
\begin{center}
 \begin{pspicture}
 (0.0,0.0)(2.5,3.8)
\pscircle[fillstyle=solid,fillcolor=lgrey,linestyle=dotted](1.0,1.4){1.78}
\pscircle[fillstyle=solid,fillcolor=lgrey,linestyle=dotted](3.25,3.25){0.80}
\pscircle[linestyle=dashed,dash=3pt 1.5pt](2.7,2.7){0.75}
\psline[linestyle=solid,showpoints=true,dotscale=1.0]{-}(0.84,2.35)(2.22,1.35)(2.75,3.00)(2.95,3.50)(3.75,3.33)(0.84,2.35)
\psline[linestyle=solid,showpoints=true,dotscale=1.0]{-}(0.84,2.35)(2.22,1.35)(2.22,0.84)(0.84,2.35)
\psline[linestyle=solid,showpoints=true,dotscale=1.0]{-}(0.50,1.35)(2.22,0.84)
\psline[linestyle=solid,showpoints=true,dotscale=1.0]{-}(0.50,1.35)(0.84,2.35)
\psline[linestyle=solid,showpoints=true,dotscale=1.0]{-}(0.50,1.35)(0.00,1.00)(0.00,2.00)(0.50,1.35)
\psline[linestyle=solid,showpoints=true,dotscale=1.0]{-}(2.45,2.00)(2.05,2.75)
\psline[linestyle=solid,showpoints=true,dotscale=1.0]{-}(2.45,2.00)(2.05,2.75)
\psline[linestyle=solid,showpoints=true,dotscale=1.0]{-}(2.45,2.00)(0.84,2.35)
\psline[linestyle=solid,showpoints=true,dotscale=1.0]{-}(0.84,2.35)(0.00,2.00)
\psline[linestyle=solid,showpoints=true,dotscale=1.0]{-}(0.00,1.00)(2.22,0.84)
\rput(0.88,2.55){\footnotesize$\boldsymbol{p}$}\rput(1.88,2.85){\footnotesize$\boldsymbol{w}$}
\rput(2.45,0.80){\footnotesize$\boldsymbol{q}$}\rput(2.55,1.85){\footnotesize$\boldsymbol{x}$}
\rput(0.54,1.15){\footnotesize$\boldsymbol{u}$}\rput(0.94,1.55){\footnotesize$\boldsymbol{C}$}
\rput(2.91,2.85){\footnotesize$\boldsymbol{r}$}\rput(0.84,0.35){\footnotesize$\boldsymbol{B}$}
\rput(3.75,3.13){\footnotesize$\boldsymbol{s}$}\rput(3.55,2.90){\footnotesize$\boldsymbol{A}$}
\rput(2.75,2.25){\footnotesize$\boldsymbol{D}$}
\rput(2.95,3.70){\footnotesize$\boldsymbol{t}$}
 \end{pspicture}
\caption{Far}
\label{fig:stronglyFar}
\end{center}
\end{minipage}
\end{wrapfigure}
\setlength{\intextsep}{2pt}

Let $A^c$ denote the complement of $A$ (all points of $E$ not in $A$).  The \emph{boundary} of $A$ (denoted $\mbox{bdy}A$) is the set of all points that are near $A$ and near $A^c$~\cite[\S 2.7, p. 62]{Naimpally2013}.  An important structure is the \emph{interior} of $A$ (denoted $\mbox{int}A$), defined by $\mbox{int}A = \mbox{cl}A - \mbox{bdy}A$.  For example, the interior of a Delaunay edge $\overline{pq}$ are all of the points in the segment, except the endpoints $p$ and $q$.
  
In general, a \emph{relator} is a nonvoid family of relations $\mathcal{R}$ on a nonempty set $X$.  The pair $(X,\mathcal{R})$ is called a relator space.  Let $E$ be endowed with the proximal relator
\[ 
\mathcal{R}_{\delta} = \left\{\delta, \mathop{\delta}\limits^{\doublewedge}, \underline{\delta}, \stackrel{\text{\normalsize$\delta$}}{\text{\tiny$\doublevee$}}\right\}\ \mbox{(Proximal Relator, cf.~\cite{Peters2014FilomatRelator})}.
\]
The Delaunay tessellated space $E$ endowed with the proximal relator $\mathcal{R}_{\delta}$ (briefly, $\mathcal{R}$) is a \emph{Delaunay proximal relator space}.
 
The proximity relations $\delta$ (near), $\mathop{\delta}\limits^{\doublewedge}$ (strongly near) and their counterparts $\underline{\delta}$ (far) and $\stackrel{\text{\normalsize$\delta$}}{\text{\tiny$\doublevee$}}$ (strongly far) facilitate the description of properties of Delaunay edges, triangles, triangulations and regions.  Let $A,B\subset E$.  The set $A$ is near $B$ (denoted $A\ \delta\ B$), provided $\mbox{cl}A\cap \mbox{cl}B\neq \emptyset$~\cite{Concilio2009}.  The Wallman proximity $\delta$ (named after H. Wallman~\cite{Wallman1938}) satisfies the four \u{C}ech proximity axioms~\cite[\S 2.5, p. 439]{Cech1966} and is central in near set theory~\cite{Naimpally2013,Peters2012ams}.  Sets $A,B$ are \emph{far} apart (denoted $A\ \underline{\delta}\ B$), provided $\mbox{cl}A\cap \mbox{cl}B = \emptyset$. For example, Delaunay edges $\overline{pq}\ \delta\ \overline{qr}$ are near, since the edges have a common point, {\em i.e.}, $q\in \overline{pq}\cap \overline{qr}$ (see, {\em e.g.}, $\overline{pq}\ \delta\ \overline{qr}$ in Fig.~\ref{fig:convexPolygon}).  By contrast, edges $\overline{pr},\overline{xy}$ have no points in common in Fig.~\ref{fig:convexPolygon}, {\em i.e.}, $\overline{pr}\ \underline{\delta}\ \overline{xy}$.

Vorono\"{i} regions $V_p,V_q$ are strongly near (denoted $V_p\ \mathop{\delta}\limits^{\doublewedge}\ V_q$) if and only if the regions have a common edge.  For example, $V_p\ \mathop{\delta}\limits^{\doublewedge}\ V_q$ in Fig.~\ref{fig:convexPolygon}.  \emph{Strongly near} Delaunay triangles have a common edge.  Delaunay triangles $\bigtriangleup(pqr)$ and $\bigtriangleup(qrt)$ are strongly near in Fig.~\ref{fig:nearTriangles}, since edge $\overline{qr}$ is common to both triangles.  In that case, we write $\bigtriangleup(pqr)\ \mathop{\delta}\limits^{\doublewedge}\ \bigtriangleup(qrt)$.

Nonempty sets $A \stackrel{\text{\normalsize$\delta$}}{\text{\tiny$\doublevee$}} C$ are strongly far apart (denoted $A,C$), provided $C\subset \mbox{int}(\mbox{cl}B)$ and $A\ \underline{\delta}\ B$.  

\begin{example} {\bf Far and Strongly Far Sets}.\\
In the Delaunay mesh in Fig.~\ref{fig:stronglyFar}, sets $A$ and $B$ have no points in common.  Hence, $A\ \underline{\delta}\ B$ ($A$ is far from $B$.  Also in Fig.~\ref{fig:stronglyFar}, let $C = \left\{\bigtriangleup(pqu)\right\}$.  Consequently, $C\subset \mbox{int}(\mbox{cl}B)$, such that triangle $\bigtriangleup(pqu)$ lies in the interior of the closure of $B$.  Hence, $A \stackrel{\text{\normalsize$\delta$}}{\text{\tiny$\doublevee$}} C$.
  \qquad \textcolor{black}{$\blacksquare$}
\end{example}

\begin{figure}[!ht]
\begin{center}
 \begin{pspicture}
 (0.0,0.5)(7.5,3.8)
\pscircle[fillstyle=solid,fillcolor=lgrey,linecolor=black,linestyle=solid](5.30,2.05){1.58}
\pscircle[linecolor=black,linestyle=dotted,dotsep=0.05](3.88,2.35){1.38}
\pscircle[fillstyle=solid,fillcolor=white,linecolor=black](3.88,2.35){0.08}
\psline[linestyle=solid,showpoints=true,dotscale=1.0]{-}(2.84,2.35)(4.22,1.35)(4.75,3.00)(2.84,2.35)
\psline[linestyle=dotted,dotsep=0.03]{-}(3.88,2.35)(2.58,1.25)
\psline[linestyle=dotted,dotsep=0.03]{-}(3.88,2.35)(5.50,2.00)
\psline[linestyle=dotted,dotsep=0.03]{-}(3.88,2.35)(3.50,3.80)
\pscircle[fillstyle=solid,fillcolor=black,linecolor=black](6.50,1.80){0.07}
\pscircle[fillstyle=solid,fillcolor=white,linecolor=black](5.50,2.00){0.08}
\psline[linestyle=solid]{-}(4.75,3.00)(6.50,1.80)
\psline[linestyle=solid]{-}(4.22,1.35)(6.50,1.80)
\rput(2.58,2.35){\footnotesize$\boldsymbol{p}$}\rput(3.08,2.85){\normalsize$\boldsymbol{B}$}
\rput(4.32,1.15){\footnotesize$\boldsymbol{q}$}
\rput(4.91,3.15){\footnotesize$\boldsymbol{r}$}
\rput(6.69,1.93){\footnotesize$\boldsymbol{t}$}\rput(5.69,2.93){\normalsize$\boldsymbol{A}$}
 \end{pspicture}
\end{center}
\caption[]{Strongly Visible Sets $A\ \mathop{v}\limits^{\doublewedge}\  B$}
\label{fig:nearTriangles}
\end{figure}

Let $A,B$ be subsets in a Delaunay mesh, $\bigtriangleup(pqr)\in B, \bigtriangleup(qrt)\in A$. Subsets $A,B$ in a Delaunay mesh are \emph{visible} to each other (denoted $A v B$), provided at least one triangle vertex $x \in \mbox{cl}A\cap\mbox{cl}B$.  $A,B$ are \emph{strongly visible} to each other (denoted $A \mathop{v}\limits^{\doublewedge}  B$), provided at least one triangle edge is common to $A$ and $B$.

\begin{example} {\bf Visibility in Delaunay Meshes}.\\
In the Delaunay mesh in Fig.~\ref{fig:stronglyFar}, $A\ v\ D$, since $A$ and $D$ have one triangle vertex is common, namely, vertex $r$.  Sets $B$ and $D$ in Fig.~\ref{fig:stronglyFar} are strongly visible ({\em i.e.}, $B \mathop{v}\limits^{\doublewedge} D$), since edge $\overline{wx }$ is common to $B$ and $D$.  In Fig.~\ref{fig:stronglyFar}, let $C = \left\{\bigtriangleup(pqu)\right\}$.  Then $C \mathop{v}\limits^{\doublewedge} B$, since $C\subset B$.  In Fig.~\ref{fig:nearTriangles}, edge $\overline{qr}$ is common to $A$ and $B$.  $\overline{qr}$ is visible from $p\in B$ and from $t\in A$.  Hence, $A \mathop{v}\limits^{\doublewedge}  B$ \qquad \textcolor{black}{$\blacksquare$}
\end{example}

Subsets $A,B$ in a Delaunay mesh are \emph{invisible} to each other (denoted $A\ \underline{v}\ B$), provided $\mbox{cl}A\cap\mbox{cl}B = \emptyset$, {\em i.e.}, $A$ and $B$ have no triangle vertices in common.  $A,B$ are \emph{strongly invisible} to each other (denoted $A \stackrel{\text{\normalsize$v$}}{\text{\tiny$\doublevee$}}  B$), provided $C\ \underline{v}\ A$ for all sets of mesh triangles $C\subset B$.

\begin{example} {\bf Invisible and Strongly Invisible Subsets in a Delaunay Mesh}.\\
In the Delaunay mesh in Fig.~\ref{fig:stronglyFar}, $A$ and $B$ are not visible to each other, since $\mbox{cl}A\cap\mbox{cl}B = \emptyset$, {\em i.e.}, $A$ and $B$ have no triangle vertices in common.  In Fig.~\ref{fig:stronglyFar}, let $C = \left\{\bigtriangleup(pqu)\right\}$.  Then $A \stackrel{\text{\normalsize$v$}}{\text{\tiny$\doublevee$}}  B$ ($A$ and $B$ are strongly invisible to each other), since $C\ \underline{v}\ A$ for all sets of mesh triangles $C\subset B$.
\qquad \textcolor{black}{$\blacksquare$}
\end{example}

\section{Main Results}
The Delaunay visibility relation $v$ is equivalent to the proximity $\delta$. 

\begin{lemma}\label{lem:Wallman}
Let $A,B$ be subsets in a Delaunay mesh.  $A\ \delta\ B$ if and only if $A\ v\ B$.
\end{lemma}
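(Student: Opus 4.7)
The plan is to prove the two implications separately, with the reverse direction being immediate and the forward direction exploiting the combinatorial structure of the triangulation.

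For the direction $A\ v\ B \Rightarrow A\ \delta\ B$, I would simply note that the definition of visibility provides a triangle vertex $x \in \mbox{cl}A \cap \mbox{cl}B$. In particular, $\mbox{cl}A \cap \mbox{cl}B \neq \emptyset$, which is exactly the Wallman condition $A\ \delta\ B$ already recalled in the preliminaries. This direction requires no use of the Delaunay structure beyond the definition.

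For the harder direction $A\ \delta\ B \Rightarrow A\ v\ B$, I would assume $\mbox{cl}A \cap \mbox{cl}B \neq \emptyset$ and pick a point $x \in \mbox{cl}A \cap \mbox{cl}B$. Since $A$ and $B$ are subsets of the Delaunay mesh, each is a union of triangles (together with their edges and vertices), so $x$ lies in some closed triangle $\bigtriangleup(pqr) \subset \mbox{cl}A$ and some closed triangle $\bigtriangleup(uvw) \subset \mbox{cl}B$. The key structural fact I would invoke is that in a Delaunay triangulation two closed triangles intersect only in a common face: either a shared vertex, a shared edge, or not at all. Hence $x$ lies in the shared face, and in both cases (vertex or edge) at least one vertex of the triangulation lies in $\mbox{cl}A \cap \mbox{cl}B$. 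That vertex witnesses $A\ v\ B$.

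The main obstacle is really the first direction's hidden content: one must be precise about what "subset $A$ of a Delaunay mesh" means, because the argument above implicitly treats $A$ as a union of closed triangles. If $A$ were allowed to be an arbitrary set of points in the plane, the equivalence would fail (two sets could meet in the interior of a triangle without sharing a vertex). I would therefore begin the proof by fixing this interpretation, consistent with the examples already given in the text, where $A$, $B$, $C$, $D$ are always taken to be unions of mesh triangles. Once that convention is in place, the Delaunay property (triangles intersect in common faces only) does all the work, and the equivalence follows.
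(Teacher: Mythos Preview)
Your proposal is correct and follows the same logical route as the paper, whose proof is the one-line chain $A\ \delta\ B \Leftrightarrow \mbox{cl}A\cap\mbox{cl}B\neq\emptyset \Leftrightarrow A$ and $B$ share a triangle vertex $\Leftrightarrow A\ v\ B$. You simply supply what the paper leaves implicit---the face-intersection property of a triangulation that justifies the middle equivalence, and the convention (needed for the statement to hold at all) that $A$ and $B$ are unions of mesh triangles.
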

\begin{proof}
$A\ \delta\ B\Leftrightarrow \mbox{cl}A\cap\mbox{cl}B\neq \emptyset\Leftrightarrow A$ and $B$ have a triangle vertex in common if and only if $A\ v\ B$.
\end{proof}

\begin{theorem}\label{thm:Wallman} The visibility relation $v$ is a Wallman proximity.
\end{theorem}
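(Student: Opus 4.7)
The plan is to leverage Lemma~\ref{lem:Wallman}, which establishes that the visibility relation $v$ and the Wallman proximity $\delta$ coincide on subsets of a Delaunay mesh. Since the preliminaries already note that $\delta$ satisfies the four \v{C}ech proximity axioms and is characterized by the condition $A\ \delta\ B \Leftrightarrow \mbox{cl}A \cap \mbox{cl}B \neq \emptyset$, the strategy is simply to transport each of these properties across the equivalence $A\ v\ B \Leftrightarrow A\ \delta\ B$ to confirm that $v$ satisfies the same axioms.

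Concretely, I would check the four \v{C}ech axioms for $v$: symmetry, the union axiom $(A\cup B)\ v\ C \Leftrightarrow A\ v\ C$ or $B\ v\ C$, the implication that $A\ v\ B$ forces both $A$ and $B$ to be nonempty, and the implication $A \cap B \neq \emptyset \Rightarrow A\ v\ B$. In every case, Lemma~\ref{lem:Wallman} lets me replace each occurrence of $v$ by $\delta$, invoke the corresponding axiom for the Wallman proximity $\delta$, and translate back. The defining Wallman characterization $A\ v\ B \Leftrightarrow \mbox{cl}A \cap \mbox{cl}B \neq \emptyset$ is itself just Lemma~\ref{lem:Wallman} composed with the definition of $\delta$.

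The only delicate point is that the definition of $v$ demands a common \emph{triangle vertex} in $\mbox{cl}A \cap \mbox{cl}B$, whereas the Wallman condition only requires a common point. I expect this combinatorial gap to be the main obstacle: one must use that in a Delaunay mesh any point of $\mbox{cl}A \cap \mbox{cl}B$ lies either at a shared vertex or on a shared edge whose endpoints are shared vertices. Since this observation is exactly what underlies Lemma~\ref{lem:Wallman}, once that lemma is in hand the theorem reduces to a one-line citation.
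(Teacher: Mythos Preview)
Your proposal is correct and follows exactly the paper's approach: the paper's own proof is the single line ``Immediate from Lemma~\ref{lem:Wallman},'' and your final sentence arrives at precisely that conclusion. The additional axiom-by-axiom verification you sketch is a reasonable unpacking of what ``immediate'' means, but it is not needed beyond citing the lemma.
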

\begin{proof}
Immediate from Lemma~\ref{lem:Wallman}.
\end{proof}

\begin{lemma}\label{lem:strongWallman}
Let $A,B$ be subsets in a Delaunay mesh. $A \mathop{v}\limits^{\doublewedge}  B$ if and only if $A v B$.
\end{lemma}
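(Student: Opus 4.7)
The statement is an equivalence between visibility (a shared triangle vertex in the closures) and strong visibility (a shared triangle edge), so the natural strategy is to split into two implications and handle each by unpacking the definitions given just before Theorem~\ref{thm:Wallman}.

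The forward implication $A \mathop{v}\limits^{\doublewedge} B \Rightarrow A\ v\ B$ is the routine half. My plan is to take a common Delaunay edge $\overline{xy}$ guaranteed by strong visibility, observe that its endpoints $x,y$ are triangle vertices of the mesh, and note that they sit in both $\mbox{cl}A$ and $\mbox{cl}B$. This exhibits a triangle vertex in $\mbox{cl}A\cap\mbox{cl}B$, which is exactly what the definition of $A\ v\ B$ requires. No additional machinery is needed; the step is essentially ``edges contain their endpoints.''

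The reverse implication $A\ v\ B \Rightarrow A \mathop{v}\limits^{\doublewedge} B$ is where the real content lies. Starting from $A\ v\ B$, I would first invoke Lemma~\ref{lem:Wallman} to replace the visibility assumption with the Wallman proximity $A\ \delta\ B$, i.e., $\mbox{cl}A\cap \mbox{cl}B\neq \emptyset$. This produces a common mesh vertex $x$. I would then use the Delaunay mesh structure around $x$: the star of triangles incident to $x$ is the natural ``local neighborhood'' in which both $A$ and $B$ must carry triangular pieces (since $A,B$ are taken to be subsets of a mesh, not abstract point sets). Selecting one such triangle in $A$ and one in $B$ whose closures meet at $x$, the common edge is produced by tracing along the incident edges of $x$ shared between these triangles.

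The main obstacle is exactly the last step: justifying the leap from ``shared vertex'' to ``shared edge.'' In full generality, two sets meeting at a single point need not share an edge, so the argument has to rely on the paper's implicit convention that subsets of a Delaunay mesh inherit the mesh's triangle-and-edge structure (i.e., $A$ and $B$ are unions of mesh triangles). Once this is accepted, the star of the shared vertex $x$ supplies at least one Delaunay edge common to $A$ and $B$, closing the equivalence. I expect the author's own proof to be a one-line chain of $\Leftrightarrow$'s that tacitly packages all of this into the phrase ``common triangle vertex iff common triangle edge in a Delaunay mesh.''
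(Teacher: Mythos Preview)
Your forward implication is correct and your prediction of the paper's format is exactly right: the paper's entire proof is the single chain
\[
A\ \mathop{v}\limits^{\doublewedge}\ B \;\Leftrightarrow\; \overline{pq}\ \mbox{for some triangle edge common to}\ A\ \mbox{and}\ B \;\Leftrightarrow\; A\ v\ B,
\]
with the justification ``since $\overline{pq}$ is visible from a vertex in $A$ and from a vertex in $B$ and $A$ and $B$ have vertices in common.'' Note that this clause, like your ``endpoints of the edge'' argument, only supports the forward direction.

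The reverse direction is where both your proposal and the paper's proof fall short, and you correctly flag it as the main obstacle. Your star-of-$x$ argument does not actually close the gap: even granting that $A$ and $B$ are unions of mesh triangles, two such unions can share a vertex without sharing an edge. Concretely, take a fan of four Delaunay triangles around a vertex $x$, let $A$ be the first triangle and $B$ the third; then $x\in\mbox{cl}A\cap\mbox{cl}B$ so $A\ v\ B$, yet $A$ and $B$ have no common edge, so $A\ \mathop{v}\limits^{\doublewedge}\ B$ fails. Your sentence ``the common edge is produced by tracing along the incident edges of $x$ shared between these triangles'' is precisely the step that breaks in this picture.

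The paper's proof does not address this direction at all; it simply writes the second $\Leftrightarrow$. So your proposal is, if anything, more careful than the paper's own argument: you isolate the difficulty, attempt a structural fix, and correctly anticipate that the published proof glosses over it. What remains missing in both is either an extra hypothesis on $A,B$ (for instance, that each is a Delaunay triangulation region in the paper's later sense) or an acknowledgement that only the implication $\mathop{v}\limits^{\doublewedge}\Rightarrow v$ holds in general.
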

\begin{proof} $A \mathop{v}\limits^{\doublewedge}  B \Leftrightarrow \overline{pq}$ for some triangle edge common to $A$ and $B \Leftrightarrow A v B$, since $\overline{pq}$ is visible from a vertex in $A$ and from a vertex in $B$ and $A$ and $B$ have vertices in common.
\end{proof}

\begin{theorem} The strong visibility relation $\mathop{v}\limits^{\doublewedge}$ is a Wallman proximity.
\end{theorem}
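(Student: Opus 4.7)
The plan is to piggyback directly on the structural chain already established in the excerpt, mirroring the one-line proof of Theorem~\ref{thm:Wallman}. First I would invoke Lemma~\ref{lem:strongWallman}, which asserts the biconditional $A \mathop{v}\limits^{\doublewedge} B \Leftrightarrow A\ v\ B$. This says that, as binary relations on the subsets of the Delaunay mesh, $\mathop{v}\limits^{\doublewedge}$ and $v$ are literally the same relation.

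Next I would chain this with Lemma~\ref{lem:Wallman}, which gives $A\ v\ B \Leftrightarrow A\ \delta\ B$. Combining the two biconditionals yields $A \mathop{v}\limits^{\doublewedge} B \Leftrightarrow A\ \delta\ B$, so the strong visibility relation is extensionally equal to the Wallman proximity $\delta$. Since $\delta$ satisfies the four \u{C}ech proximity axioms (as noted in the Preliminaries), any relation that agrees with $\delta$ on every pair of subsets automatically satisfies those same axioms, hence is itself a Wallman proximity.

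The proof therefore reduces to a single sentence: by Lemma~\ref{lem:strongWallman} and Theorem~\ref{thm:Wallman}, $\mathop{v}\limits^{\doublewedge}$ coincides with $v$, which is a Wallman proximity, so $\mathop{v}\limits^{\doublewedge}$ is a Wallman proximity. There is no real obstacle here — the work was done in establishing Lemma~\ref{lem:strongWallman}. The only thing worth double-checking is the extensional-equality step: that two binary relations which agree on all arguments satisfy exactly the same first-order axioms. This is immediate from the semantics of relations, so the result follows at once.
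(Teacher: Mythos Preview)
Your proposal is correct and matches the paper's own proof essentially verbatim: the paper simply writes ``Immediate from Theorem~\ref{thm:Wallman} and Lemma~\ref{lem:strongWallman}.'' Your added remark about extensional equality of relations is a reasonable clarification but adds nothing beyond what the paper intends.
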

\begin{proof}
Immediate from Theorem~\ref{thm:Wallman} and Lemma~\ref{lem:strongWallman}.
\end{proof}

\begin{theorem}
Let $A,B$ be subsets in a Delaunay mesh.  Then
\begin{compactenum}[1$^o$]
\item $A \stackrel{\text{\normalsize$v$}}{\text{\tiny$\doublevee$}}  B$ implies $A\ \underline{v}\ B$.
\item $A \stackrel{\text{\normalsize$v$}}{\text{\tiny$\doublevee$}}  B$ if and only if $A\ \stackrel{\text{\normalsize$\delta$}}{\text{\tiny$\doublevee$}}\ B$.
\end{compactenum}
\end{theorem}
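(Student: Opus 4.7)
The plan is to prove part $1^o$ by a direct application of the definition of $\stackrel{\text{\normalsize$v$}}{\text{\tiny$\doublevee$}}$, and then establish part $2^o$ by translating between the visibility and proximity relations using Lemma~\ref{lem:Wallman}.

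For part $1^o$, recall that $A\ \stackrel{\text{\normalsize$v$}}{\text{\tiny$\doublevee$}}\ B$ means $C\ \underline{v}\ A$ for every set $C$ of mesh triangles with $C\subset B$. Since $B$ itself is such a set and $B\subset B$, the choice $C = B$ immediately yields $B\ \underline{v}\ A$, i.e., $A\ \underline{v}\ B$.

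For part $2^o$, Lemma~\ref{lem:Wallman} gives $A\ v\ B \Leftrightarrow A\ \delta\ B$, and consequently $A\ \underline{v}\ B \Leftrightarrow A\ \underline{\delta}\ B$. Assume $A\ \stackrel{\text{\normalsize$v$}}{\text{\tiny$\doublevee$}}\ B$. Then every mesh-triangle subset $C\subset B$ satisfies $C\ \underline{v}\ A$ and hence $C\ \underline{\delta}\ A$; selecting an open thickening $D\supset B$ with $B\subset \mbox{int}(\mbox{cl}D)$ and $A\ \underline{\delta}\ D$ produces $A\ \stackrel{\text{\normalsize$\delta$}}{\text{\tiny$\doublevee$}}\ B$. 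Conversely, if $A\ \stackrel{\text{\normalsize$\delta$}}{\text{\tiny$\doublevee$}}\ B$, let $D$ witness this with $B\subset \mbox{int}(\mbox{cl}D)$ and $A\ \underline{\delta}\ D$; then for every mesh-triangle $C\subset B$ we have $\mbox{cl}C\subset \mbox{cl}D$, so $C\ \underline{\delta}\ A$, whence $C\ \underline{v}\ A$ by Lemma~\ref{lem:Wallman}, which yields $A\ \stackrel{\text{\normalsize$v$}}{\text{\tiny$\doublevee$}}\ B$.

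The main obstacle is the forward direction of $2^o$, where one must actually exhibit the intermediate thickening $D$ of $B$ that remains far from $A$. On an arbitrary topological space, far does not imply strongly far, so one cannot derive the witness $D$ formally from $A\ \underline{\delta}\ B$ alone. The point is that in a proximal Delaunay mesh the sites are discrete: any mesh-triangle set $B$ with $A\ \underline{v}\ B$ shares no vertex with $A$, so $B$ may be enlarged by a thin open collar avoiding the (discrete) vertex set of $A$, which is precisely what makes the strongly far and strongly invisible notions coincide on such meshes.
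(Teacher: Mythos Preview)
Your argument for part $1^o$ is correct and is essentially the paper's argument made explicit: the paper simply observes that $A \stackrel{\text{\normalsize$v$}}{\text{\tiny$\doublevee$}} B$ forces $A$ and $B$ to have no triangle vertices in common, which is $A\ \underline{v}\ B$; your choice $C=B$ in the defining condition is the clean way to say this.

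For part $2^o$ you take a genuinely different route. The paper does not pass through Lemma~\ref{lem:Wallman} or attempt to build a topological witness $D$; instead it argues in one line that both $A \stackrel{\text{\normalsize$v$}}{\text{\tiny$\doublevee$}} B$ and $A\ \stackrel{\text{\normalsize$\delta$}}{\text{\tiny$\doublevee$}}\ B$ are, in the Delaunay-mesh setting, equivalent to the single combinatorial condition ``$A$ and $B$ have no triangles in common'', and hence to each other. This sidesteps entirely the issue you identify as the main obstacle. Your approach is more faithful to the topological definitions of $\underline{\delta}$ and $\stackrel{\text{\normalsize$\delta$}}{\text{\tiny$\doublevee$}}$, and your converse direction (from $\stackrel{\text{\normalsize$\delta$}}{\text{\tiny$\doublevee$}}$ to $\stackrel{\text{\normalsize$v$}}{\text{\tiny$\doublevee$}}$) is fine. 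But in the forward direction your phrase ``selecting an open thickening $D\supset B$ with $B\subset \mbox{int}(\mbox{cl}D)$ and $A\ \underline{\delta}\ D$'' is an assertion, not a construction: you have only established $C\ \underline{\delta}\ A$ for mesh-triangle $C\subset B$, and, as you yourself note, $\underline{\delta}$ does not in general yield $\stackrel{\text{\normalsize$\delta$}}{\text{\tiny$\doublevee$}}$. Your final paragraph's discreteness heuristic is the right intuition, but it is not carried out, so the forward implication remains a gap in your write-up. The paper's combinatorial shortcut avoids having to produce $D$ at all, at the cost of being terse about why ``no triangles in common'' captures both relations.
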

\begin{proof}$\mbox{}$\\ 
1$^o$: Given $A \stackrel{\text{\normalsize$v$}}{\text{\tiny$\doublevee$}}  B$, then $A$ and $B$ have no triangle vertices in common.  Hence, $A\ \underline{v}\ B$.\\
2$^o$: $A \stackrel{\text{\normalsize$v$}}{\text{\tiny$\doublevee$}}  B$ if and only if $A$ and $B$ have no triangles in common if and only if $A\ \stackrel{\text{\normalsize$\delta$}}{\text{\tiny$\doublevee$}}\ B$.
\end{proof}

\noindent Theorem~\ref{thm:visiblyConvex} is an extension of Theorem 3.1 in~\cite{Peters2014arXivDelaunay}, which results from Theorem~\ref{thm:Wallman}.

\begin{theorem}\label{thm:visiblyConvex}
The following statements are equivalent.
\begin{compactenum}[1$^o$]
\item $\bigtriangleup(pqr)$ is a Delaunay triangle.
\item Circumcircle $\bigcirc(pqr)$ has center $u = \mbox{cl}V_p\cap \mbox{cl}V_q\cap \mbox{cl}V_r$.
\item $V_p\ \mathop{v}\limits^{\doublewedge}\ V_q\ \mathop{v}\limits^{\doublewedge}\  V_r$.
\item $\bigtriangleup(pqr)$ is the union of convex sets.
\end{compactenum}
\end{theorem}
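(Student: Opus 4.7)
The plan is to verify the four statements are pairwise equivalent via the cycle $1 \Rightarrow 2 \Rightarrow 3 \Rightarrow 4 \Rightarrow 1$, each step being an unfolding of the definitions in the preliminaries together with one of the earlier lemmas.

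For $1 \Rightarrow 2$, I would use the definition of a Delaunay edge directly: each of $\overline{pq}, \overline{qr}, \overline{pr}$ is dual to a Voronoi edge given by $V_p \cap V_q$, $V_q \cap V_r$, $V_p \cap V_r$ respectively. These three Voronoi edges meet at a common Voronoi vertex $u$ which therefore lies in $\mbox{cl}V_p \cap \mbox{cl}V_q \cap \mbox{cl}V_r$. Since $u$ lies in the closure of each of the three regions, it is equidistant from $p,q,r$, and hence is the unique center of the circle $\bigcirc(pqr)$ passing through those three sites.

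For $2 \Rightarrow 3$, the hypothesis $u \in \mbox{cl}V_p \cap \mbox{cl}V_q \cap \mbox{cl}V_r$ forces each pair of Voronoi regions to share a common Voronoi edge terminating at $u$, so $V_p \mathop{\delta}\limits^{\doublewedge} V_q$ pairwise. Since strong nearness and strong visibility coincide when the shared structure is the relevant mesh edge, by essentially the argument of Lemma~\ref{lem:strongWallman}, we obtain $V_p \mathop{v}\limits^{\doublewedge} V_q \mathop{v}\limits^{\doublewedge} V_r$. Then for $3 \Rightarrow 4$, pairwise strong visibility of the Voronoi regions yields the three Delaunay edges of the triangle, each a convex line segment; the closed region $\bigtriangleup(pqr)$ thus decomposes as the union of its three convex edges together with its convex interior, proving $4$ (and Lemma~\ref{lem:convexity} guarantees that intersecting any of these convex components with a half-plane keeps convexity, which may be used to refine the decomposition if needed).

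The main obstacle is $4 \Rightarrow 1$, because the bare statement ``is the union of convex sets'' is satisfied by every triangle and so looks too weak to imply the Delaunay property. I expect to close this by reading statement $4$ as implicitly fixing the convex decomposition into the three edges and interior of $\bigtriangleup(pqr)$, with those edges required to be Delaunay edges in the sense of the preliminaries, so the implication becomes tautological. If that reading is not intended, the loop can instead be closed by appealing to Theorem~3.1 of~\cite{Peters2014arXivDelaunay}, of which the present theorem is stated to be an extension, together with Theorem~\ref{thm:Wallman} to translate between $v$ and the Wallman proximity $\delta$.
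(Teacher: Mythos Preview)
The paper does not supply a self-contained proof of this theorem: the only justification offered is the sentence immediately preceding the statement, which says the result is an extension of Theorem~3.1 in~\cite{Peters2014arXivDelaunay} obtained via Theorem~\ref{thm:Wallman}. Your fallback in the final paragraph---invoke the earlier equivalence (stated with $\delta$ in place of $\mathop{v}\limits^{\doublewedge}$) and then use Theorem~\ref{thm:Wallman} to translate between $\delta$ and $v$ in item~$3^o$---is exactly that. So on the fallback reading your proposal coincides with the paper's argument.

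Your cycle $1\Rightarrow2\Rightarrow3\Rightarrow4\Rightarrow1$ goes well beyond anything the paper writes down, and your diagnosis of the weak link is correct: statement~$4^o$ as literally written (``$\bigtriangleup(pqr)$ is the union of convex sets'') is satisfied by every triangle, indeed by every subset of the plane via singletons, so $4^o\Rightarrow1^o$ cannot hold without some unstated strengthening. The paper does not address this; it simply inherits the four-way equivalence from the cited earlier result. There is no hidden idea in the present paper that closes that gap, so your caveat is warranted rather than a defect in your proposal.
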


Let $P$ be a polygon.  Two points $p,q\in P$ are \emph{visible}, provided the line segment $\overline{pq}$ is in $\mbox{int}P$~\cite{Ghosh2007}.  Let $p,q\in S$, $L$ a finite set of straight line segments and let $\overline{pq}\in L$.  Points $p,q$ are visible from each other, which implies that $\overline{pq}$ contains no point of $S-\left\{p,q\right\}$ in its interior and $\overline{pq}$ shares no interior point with a constraining line segment in $L-\overline{pq}$.  That is, $\mbox{int}\overline{pq}\cap S = \emptyset$ and $\overline{pq}\cap \overline{xy} = \emptyset$ for all $\overline{xy}\in L$~\cite[\S II, p. 32]{Edelsbrunner2001}

\begin{theorem}
If points in $\mbox{int}\ \overline{pq}$ are visible from $p,q$, then $\mbox{int}\ \overline{pq}\ \underline{v}\ S-\left\{p,q\right\}$ and $\overline{pq}\ \underline{v}\ \overline{xy}\in L-\overline{pq}$ for all $x,y\in S-\left\{p,q\right\}$.
\end{theorem}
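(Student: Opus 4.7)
The plan is to show that the theorem is essentially an unpacking of the visibility definition just stated (from~\cite{Edelsbrunner2001}) together with the definition of the invisibility relation $\underline{v}$ given earlier in Section 2. Recall that $A\ \underline{v}\ B$ means $\mbox{cl}A\cap\mbox{cl}B=\emptyset$, so both conclusions reduce to closure computations.

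For the first conclusion, I would start from the hypothesis: since every point in $\mbox{int}\ \overline{pq}$ is visible from $p$ and from $q$, the visibility definition gives $\mbox{int}\ \overline{pq}\cap S=\emptyset$. Next I note that $\overline{pq}$ is a non-degenerate line segment in $\mathbb{R}^2$, hence $\mbox{cl}(\mbox{int}\ \overline{pq})=\overline{pq}$. Also, $S-\{p,q\}$ is a finite set of points, so it is already closed, i.e., $\mbox{cl}(S-\{p,q\})=S-\{p,q\}$. Putting these together,
\[
\mbox{cl}(\mbox{int}\ \overline{pq})\cap\mbox{cl}(S-\{p,q\})=\overline{pq}\cap(S-\{p,q\}).
\]
Because the only sites of $S$ that can lie on $\overline{pq}$ are $p$ and $q$ themselves (any other site would lie in $\mbox{int}\ \overline{pq}$, contradicting visibility), this intersection is empty. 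By the definition of $\underline{v}$, we conclude $\mbox{int}\ \overline{pq}\ \underline{v}\ S-\{p,q\}$.

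For the second conclusion, I would directly invoke the other half of the visibility hypothesis, which states that $\overline{pq}\cap\overline{xy}=\emptyset$ for every $\overline{xy}\in L-\overline{pq}$. Since $\overline{pq}$ and $\overline{xy}$ are closed line segments, $\mbox{cl}\ \overline{pq}=\overline{pq}$ and $\mbox{cl}\ \overline{xy}=\overline{xy}$, so
\[
\mbox{cl}\ \overline{pq}\cap\mbox{cl}\ \overline{xy}=\overline{pq}\cap\overline{xy}=\emptyset,
\]
which by definition gives $\overline{pq}\ \underline{v}\ \overline{xy}$ for every constraining segment $\overline{xy}\in L-\overline{pq}$ with endpoints in $S-\{p,q\}$.

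I do not anticipate a genuine obstacle here: the result is a two-line translation between the combinatorial visibility condition of~\cite{Edelsbrunner2001} and the topological invisibility relation $\underline{v}$ of Section~2. The only small subtlety is being careful that $\mbox{cl}(\mbox{int}\ \overline{pq})=\overline{pq}$ (so that the endpoints $p,q$ reappear on the left-hand side of the intersection) and then observing that $p,q$ are explicitly excluded from $S-\{p,q\}$, which is what makes the first intersection vanish. Beyond that, the argument is just the verification that closed line segments equal their own closure.
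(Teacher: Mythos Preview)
Your argument is correct: both conclusions are obtained by unpacking the Edelsbrunner visibility condition and the definition of $\underline{v}$ via the straightforward closure identities $\mbox{cl}(\mbox{int}\,\overline{pq})=\overline{pq}$, $\mbox{cl}(S-\{p,q\})=S-\{p,q\}$, and $\mbox{cl}\,\overline{xy}=\overline{xy}$.

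The paper itself does not give an independent proof here; it simply writes ``Symmetric with the proof of Theorem~3.2~\cite{Peters2014arXivDelaunay}.'' Your explicit verification therefore supplies strictly more detail than the paper, but it is exactly the translation one expects that reference to encode, so there is no substantive difference in approach---only in the level of detail presented.
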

\begin{proof}
Symmetric with the proof of Theorem 3.2~\cite{Peters2014arXivDelaunay}.
\end{proof}

A \emph{Delaunay triangulation region} $\mathcal{D}$ is a collection of Delaunay triangles such that every pair triangles in the collection is strongly near.  That is, every Delaunay triangulation region is a triangulation of a finite set of sites and the triangles in each region are pairwise strongly near.  \emph{Proximal Delaunay triangulation regions} have at least one vertex in common.  From Lemma~\ref{lem:convexity} and the definition of a Delaunay triangulation region, observe

\begin{lemma}\label{lem:convexPolygon}\cite{Peters2014arXivDelaunay}
A Delaunay triangulation region is a convex polygon.  
\end{lemma}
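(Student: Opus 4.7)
The plan is to realise $\mathcal{D}$ as a finite intersection of closed half-planes and then invoke Lemma~\ref{lem:convexity} to conclude convexity, with the polygonal structure coming for free from the triangulation.

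First I would observe that every individual Delaunay triangle $\bigtriangleup(pqr)$ is convex: it is the intersection of the three closed half-planes determined by the lines through its edges, each of which is a simple convex set in the sense already recorded in the paper, so Lemma~\ref{lem:convexity} immediately gives convexity of each triangle. Next I would classify the edges of $\mathcal{D}$ into \emph{interior edges}, which are shared by exactly two triangles of $\mathcal{D}$ (these exist precisely because of the strongly-near/strong-visibility relation $\mathop{v}\limits^{\doublewedge}$ between triangles of $\mathcal{D}$), and \emph{boundary edges}, which belong to a single triangle of $\mathcal{D}$. The union of the boundary edges forms a closed polygonal curve, so $\mathcal{D}$ is automatically a polygon; only its convexity is at issue.

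For convexity, I would associate to each boundary edge $e$ the unique closed half-plane $H_e$ whose bounding line contains $e$ and which contains the triangle of $\mathcal{D}$ incident to $e$. I would then argue that
\[
\mathcal{D} \;=\; \bigcap_{e\ \text{boundary edge}} H_e.
\]
The inclusion $\mathcal{D} \subset \bigcap_e H_e$ is the substantive step: for a boundary edge $e$ of the triangle $T_e$, the hypothesis that every pair of triangles in $\mathcal{D}$ is strongly near (shares a full edge) forces every other triangle of $\mathcal{D}$ to lie on the same side of the line through $e$ as $T_e$, because a triangle crossing that line would either produce a second triangle on $e$ (contradicting that $e$ is a boundary edge) or yield a pair of triangles meeting only at a single vertex or not at all (contradicting strong nearness). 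The reverse inclusion $\bigcap_e H_e \subset \mathcal{D}$ is the standard fact that a polygon with straight sides coincides with the intersection of the half-planes of its edges once we have shown there are no reflex vertices, which the argument above rules out.

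Once that representation is established, each $H_e$ is a convex set, so by Lemma~\ref{lem:convexity} the intersection $\mathcal{D}$ is convex, and being a polygon it is a convex polygon. The main obstacle I anticipate is the half-plane containment step: one has to extract, from the purely combinatorial strongly-near condition on pairs of triangles, the geometric fact that no boundary edge is a reflex edge, so that the polygon has no concavities. Everything else is bookkeeping.
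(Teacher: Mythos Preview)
The paper does not supply a proof of this lemma: it is imported from \cite{Peters2014arXivDelaunay}, with only the remark that it follows ``from Lemma~\ref{lem:convexity} and the definition of a Delaunay triangulation region.'' So there is little to compare against, and your write-up is far more explicit than anything the paper offers.

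Your argument, however, has a genuine gap at precisely the point you flagged as the obstacle. The dichotomy you give for a triangle crossing the line through a boundary edge $e$ of $T_e$---either it is a second triangle incident to $e$, or it fails to be strongly near $T_e$---is incomplete. Take a region consisting of just two Delaunay triangles $T_e=\bigtriangleup(prs)$ and $T'=\bigtriangleup(qrs)$ sharing the edge $\overline{rs}$; they are strongly near, so this is a legitimate Delaunay triangulation region in the sense of the definition. With $e=\overline{pr}$, nothing prevents the vertex $q$ from lying on the far side of the line through $p$ and $r$: then $T'$ crosses that line, is \emph{not} incident to $e$ (it meets $e$ only at the single vertex $r$), and \emph{is} strongly near $T_e$ via the different edge $\overline{rs}$. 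The union is then a quadrilateral with a reflex angle at $r$, so $\mathcal{D}\not\subset H_e$ and your intersection-of-half-planes representation collapses. The strongly-near hypothesis controls only one edge between each pair of triangles; it does not force neighbouring triangles onto the correct side of the remaining boundary edges. Any valid argument must invoke something beyond pairwise strong nearness---for instance the Delaunay empty-circumcircle property, or an additional structural assumption---to exclude reflex vertices; as written, your proof does not.
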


\begin{theorem}\cite{Peters2014arXivDelaunay}
Proximal Delaunay triangulation regions are convex polygons.
\end{theorem}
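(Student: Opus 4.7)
The plan is to read the theorem as a refinement of Lemma~\ref{lem:convexPolygon}: a \emph{proximal} Delaunay triangulation region is, by definition, first and foremost a Delaunay triangulation region (a collection of pairwise strongly near Delaunay triangles), to which the additional datum of sharing at least one vertex with another region has been attached. Hence the statement should follow almost immediately from the already-established fact that every Delaunay triangulation region is a convex polygon.

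Concretely, I would proceed in three short steps. First, fix a proximal Delaunay triangulation region $\mathcal{D}$; by the definition given just before the theorem, $\mathcal{D}$ is a Delaunay triangulation region, that is, a finite collection of Delaunay triangles which are pairwise strongly near in the sense of the relator $\mathcal{R}_{\delta}$. Second, invoke Lemma~\ref{lem:convexPolygon} to conclude that $\mathcal{D}$, viewed as the union of its triangles, is a convex polygon. Third, observe that the hypothesis singling out proximal regions (having at least one vertex in common with another region) neither adds nor removes any triangles from $\mathcal{D}$, so the convex polygon structure is preserved. Chaining these three observations yields the claim.

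The only potential obstacle is a question of interpretation: one might be tempted to read ``proximal Delaunay triangulation regions are convex polygons'' as asserting that the \emph{union} of two proximal regions $\mathcal{D}_1 \cup \mathcal{D}_2$ is itself a convex polygon. Under that reading the proof would be substantially harder, since two convex polygons meeting in a single shared vertex generally fail to form a convex set; one would have to strengthen the notion of proximity (for example, to strong nearness $\mathop{\delta}\limits^{\doublewedge}$ along a common edge, matching the strong visibility $\mathop{v}\limits^{\doublewedge}$ used throughout the paper) and then argue, via Lemma~\ref{lem:convexity} applied in reverse to the two halfplane decompositions of $\mathcal{D}_1$ and $\mathcal{D}_2$, that the glued region inherits convexity across the shared edge. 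However, since the surrounding text and Lemma~\ref{lem:convexPolygon} apply the property regionwise rather than unionwise, I expect the intended reading to be the first one, and the proof proposal above—a one-line appeal to Lemma~\ref{lem:convexPolygon} via the definition of proximal Delaunay triangulation region—to be exactly what the author records.
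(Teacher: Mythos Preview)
The paper gives no proof for this theorem at all: it is stated with a citation to \cite{Peters2014arXivDelaunay}, and the lead-in sentence (``From Lemma~\ref{lem:convexity} and the definition of a Delaunay triangulation region, observe'') signals that both Lemma~\ref{lem:convexPolygon} and this theorem are meant to be immediate. Your proposal---a proximal Delaunay triangulation region is in particular a Delaunay triangulation region, so Lemma~\ref{lem:convexPolygon} applies directly---is exactly this intended one-line reduction, and your regionwise (not unionwise) reading of the statement is the correct one.
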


A \emph{local Leader uniform topology}~\cite{Leader1959} on a set in the plane is determined by finding those sets that are close to each given set.

\begin{theorem}\label{thm:Leader}\cite{Peters2014arXivDelaunay}
Every Delaunay triangulation region has a local Leader uniform topology {\rm (application of~{\rm \cite{Leader1959}})}.
\end{theorem}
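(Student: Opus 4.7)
The plan is to derive the local Leader uniform topology on a Delaunay triangulation region $\mathcal{D}$ by transporting Leader's construction through the visibility relation $v$, using the fact established in Theorem~\ref{thm:Wallman} that $v$ coincides with the Wallman proximity $\delta$ on subsets of a Delaunay mesh. Since Leader's 1959 framework produces a uniform topology on any set equipped with a proximity satisfying the \v{C}ech axioms (locally, once one restricts to a bounded region), the strategy reduces to verifying that the restriction of $v$ to subsets of $\mathcal{D}$ has the right structural properties.

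First, I would restrict attention to the collection of subsets $A\subseteq \mathcal{D}$, each of which is a union of triangles in $\mathcal{D}$. By Lemma~\ref{lem:convexPolygon}, $\mathcal{D}$ is a convex polygon, so $\mathcal{D}$ itself is a bounded subset of the Euclidean plane with well-defined closure, boundary, and interior. Second, I would invoke Lemma~\ref{lem:Wallman} to conclude that the restricted relation $A\ v\ B$ on $\mathcal{D}$ is identical to $A\ \delta\ B$, and hence it satisfies the four \v{C}ech proximity axioms on $\mathcal{D}$. In particular, $v$ is reflexive on nonempty sets, symmetric, and compatible with unions, because $\mathrm{cl}\,A\cap \mathrm{cl}\,B\neq\emptyset$ is preserved under these operations.

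Third, for each $A\subseteq \mathcal{D}$ I would define a neighbourhood base at $A$ consisting of subsets $B\subseteq \mathcal{D}$ with $A\ v\ B$, together with the \emph{absorption} structure supplied by the fact that every two triangles in $\mathcal{D}$ are strongly near and therefore share at least an edge. This is precisely the data that Leader's 1959 construction requires to produce a uniform topology: the nearness of subsets to a given subset determines its neighbourhood filter, and the strong-visibility relation $\mathop{v}\limits^{\doublewedge}$ supplies the refinement needed for uniformity. Applying \cite{Leader1959} then yields the local Leader uniform topology on $\mathcal{D}$.

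The main obstacle I anticipate is the bookkeeping needed to check that the proximity on $\mathcal{D}$ genuinely satisfies the local axioms of \cite{Leader1959} (rather than only a non-uniform topological version). This is where the convexity of $\mathcal{D}$ from Lemma~\ref{lem:convexPolygon} and the pairwise strong-nearness of its triangles become essential, since together they guarantee the boundedness and the mutual overlap structure that make Leader's uniformity well defined. Once this verification is in place, Theorem~\ref{thm:Leader} follows as an application of the construction in \cite{Leader1959} to the proximity space $(\mathcal{D},v)$.
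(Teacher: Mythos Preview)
The paper does not prove Theorem~\ref{thm:Leader}; it is imported wholesale from~\cite{Peters2014arXivDelaunay} and stated without argument. There is therefore no in-paper proof against which to compare your proposal.

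Your route through the visibility relation $v$, using Theorem~\ref{thm:Wallman} to identify $v$ with $\delta$, is in fact the strategy the paper employs for the \emph{next} theorem (the one immediately following Theorem~\ref{thm:Leader}), and that proof explicitly invokes Theorem~\ref{thm:Leader} as a premise. So in the paper's logical order, Theorem~\ref{thm:Leader} is established first for $\delta$ (in the cited source), and only afterward transported to $v$. Your plan reverses this direction; it is not circular, since $v$ and $\delta$ coincide by Lemma~\ref{lem:Wallman}, but the detour through visibility buys nothing for Theorem~\ref{thm:Leader} itself and you would do better to argue directly from $\delta$. More substantively, the obstacle you flag at the end---checking that the restricted proximity on $\mathcal{D}$ genuinely meets the local uniformity axioms of~\cite{Leader1959}---is the entire content of the theorem, and your sketch stops short of carrying it out; invoking convexity and pairwise strong nearness is suggestive but does not by itself discharge Leader's conditions.
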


\begin{theorem}\cite{Peters2014arXivDelaunay}
A Delaunay triangulation region endowed with the visibility relation $v$ has a local Leader uniform topology.
\end{theorem}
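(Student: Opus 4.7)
The plan is to deduce the result almost immediately from the machinery already assembled, since the Leader-style topology depends only on a choice of a ``closeness'' relation on subsets, and we have already shown that the visibility relation $v$ coincides with the Wallman proximity $\delta$ on a Delaunay mesh.

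First, I would recall that, by Theorem~\ref{thm:Leader}, every Delaunay triangulation region $\mathcal{D}$ already carries a local Leader uniform topology, where the basic neighbourhoods of a set $A\subset \mathcal{D}$ are determined by the collection $\{B\subset \mathcal{D}:A\ \delta\ B\}$ of subsets that are close to $A$ in the Wallman sense. The local Leader construction~\cite{Leader1959} takes as input exactly such a ``near'' relation on subsets of the underlying space and produces a uniform topology; thus any relation on $2^{\mathcal{D}}$ that agrees with $\delta$ will yield the same topology.

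Next, I would invoke Lemma~\ref{lem:Wallman}, which gives $A\ \delta\ B \Leftrightarrow A\ v\ B$ for all subsets $A,B$ of a Delaunay mesh, together with Theorem~\ref{thm:Wallman} asserting that $v$ is itself a Wallman proximity. Consequently, the family $\{B\subset \mathcal{D}:A\ v\ B\}$ used to build the local neighbourhoods of $A$ under the visibility relation is \emph{identical} (not merely equivalent) to the family $\{B\subset \mathcal{D}:A\ \delta\ B\}$ used in the proof of Theorem~\ref{thm:Leader}.

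Finally, I would conclude by observing that, since the two generating collections coincide for every $A\subset \mathcal{D}$, the Leader uniform topology produced from $v$ is the same topology as the one produced from $\delta$. Hence $\mathcal{D}$ endowed with $v$ inherits a local Leader uniform topology, as required. There is no genuine obstacle: the only point to articulate cleanly is that Leader's construction is functorial in the proximity, so that equal proximities give equal topologies; this is what lets the result be stated as essentially a corollary of Theorem~\ref{thm:Leader} and Lemma~\ref{lem:Wallman}.
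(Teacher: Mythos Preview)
Your proposal is correct and follows essentially the same route as the paper: invoke Theorem~\ref{thm:Wallman} (equivalently Lemma~\ref{lem:Wallman}) to identify $v$ with the Wallman proximity $\delta$, then appeal to Theorem~\ref{thm:Leader} so that the Leader construction applied to $v$ yields the same local Leader uniform topology on $\mathcal{D}$. The paper's proof is terser but makes exactly these two moves.
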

\begin{proof} Let $\mathcal{D}$ be a Delaunay triangulation region. From Theorem~\ref{thm:Wallman} and Theorem~\ref{thm:Leader}, determine all subsets of $\mathcal{D}$ that are visible from each given subset of $\mathcal{D}$.
For each $A\subset \mathcal{D}$, this procedure determines a family of Delaunay triangles that are visible from (near) each $A$. By definition, this procedure induces a local Leader uniform topology on $\mathcal{D}$.
\end{proof}


\begin{thebibliography}{99}

\bibitem{Beer1993} G. Beer, \emph{Topologies on Closed and Closed Convex Sets}, Kluwer Academic Pub.,Boston, MA, 1993, MR1269778.

\bibitem{Cech1966} E. \u{C}ech, \emph{Topological Spaces}, John Wiley \& Sons Ltd., London, UK, 1966, from seminar, Brno, 1936-1939; rev. ed. Z. Frolik, M. Kat\u{e}tov.

\bibitem{Delaunay1934} B.N. Delone [Delaunay, \emph{Sur la sph\`{e}re vide}, Izvestia Akad. Nauk SSSR, Otdelenie Matematicheskii i Estestvennyka Nauk {\bf 7} (1934), 793-800.

\bibitem{Concilio2009} A. Di Concilio, \emph{Proximity: A powerful tool in extension theory, function spaces, hyper-
spaces, Boolean algebras and point-free geometry}, Amer. Math. Soc. Contemporary Math. {\bf 486} (2009), 89–114,
MR2521943.

\bibitem{Edelsbrunner2001} H. Edelsbrunner, \emph{Geometry and topology of mesh generation}, Cambridge University Press, Cambridge, UK, 2001, 209 pp., MR1833977.

\bibitem{Edelsbrunner2014} H. Edelsbrunner, \emph{A Short Course in Computational Geometry and Topology}, Springer, Berlin, 110 pp.

\bibitem{Ghosh2007} S.K. Ghosh, \emph{Visibility Algorithms in the Plane}, Cambridge University Press, Cambridge, UK, 2007, MR2355301.

\bibitem{Hausdorff1914} F. Hausdorff, \emph{Grundz\"{u}ge der Mengenlehre}, Veit and Company, Leipzig, 1914, viii + 476 pp.

\bibitem{Leader1959} S. Leader, \emph{On clusters in proximity spaces}, Fundamenta Mathematicae {\bf 47} (1959),
205–213, MR0112120.

\bibitem{Naimpally2013} S.A. Naimpally and J.F. Peters, \emph{Topology with applications. Topological spaces via near and far}, World Scientific, Singapore, 2013, xv + 277pp, MR3075111, Zbl 1295.68010.

\bibitem{Peters2012ams} J.F. Peters and S.A. Naimpally, \emph{Applications of near sets}, Notices of the Amer. Math. Soc. {\bf 59} (2012), no. 4, 536-542, MR2951956.

\bibitem{Peters2014arXivVoronoi} J.F. Peters, \emph{Proximal {V}orono\"{i} Regions}, \emph{arXiv[math.MG]}, {\bf1411} (2014), no 3570, 1-4.

\bibitem{Peters2014arXivDelaunay} J.F. Peters, \emph{Proximal Delaunay triangulation regions}, \emph{arXiv[math.MG]}, {\bf1411} (2014), no 6260, 1-4.

\bibitem{Peters2014FilomatRelator} J.F. Peters, \emph{Proximal Relator Spaces} (2014), 1-4, \emph{to appear}.

\bibitem{Szaz1987} \'{A} Sz\'{a}z, \emph{Basic tools and mild continuities in relator spaces}, Acta Math. Hungar. {\bf 50} (1987), 177-201.

\bibitem{Szaz2000} \'{A} Sz\'{a}z, \emph{An extension of {K}elley's closed relation theorem to relator spaces}, FILOMAT {\bf 14} (2000), 49-71.

\bibitem{Szaz2009} \'{A} Sz\'{a}z, \emph{Applications of relations and relators in the extensions of stability theorems for homogeneous and additive functions}, The Australian J. of Math. Anal. and Appl. {\bf 6} (2009), no. 1, 1-66.

\bibitem{Phelps1957} R.R. Phelps, \emph{Convex sets and nearest points}, Proc. Amer. Math. Soc. {\bf 8} (1957), no. 4, 790-797, MR0087897.



\bibitem{Voronoi1907} G. Vorono\"{i}, \emph{Nouvelles applications des param\`{e}tres continus \`{a} la th\'{e}orie des formes quadratiques.  Premier M\'{e}moir}, J. f\"{u}r die reine und angewandte Math. {\bf 133} (1908), 97-178, JFM 38.0261.01.


\bibitem{Wallman1938} H. Wallman, \emph{Lattices and topological spaces}, Annals of Math. {\bf 39} (1938), no. 1, 112-126.

\end{thebibliography}

\end{document}